\documentclass[reqno]{amsart}
\usepackage[T1]{fontenc}
\usepackage{dsfont}
\usepackage{mathrsfs}
\usepackage[colorlinks]{hyperref}
\usepackage{xcolor}
\usepackage[a4paper,asymmetric]{geometry}
\usepackage{mathscinet}
\usepackage{latexsym}
\usepackage{amsthm}
\usepackage{amssymb}
\usepackage{amsfonts}
\usepackage{amsmath}
\usepackage{longtable}
\usepackage{accents}
\usepackage{graphicx}
\usepackage{multirow}
\usepackage{multicol}
\usepackage{empheq}

\setcounter{MaxMatrixCols}{10}

\newtheorem{theorem}{Theorem}[section]
\newtheorem{thm}[theorem]{Theorem}

\newtheorem{lem}[theorem]{Lemma}
\newtheorem{remark}[theorem]{Remark}
\newtheorem{proposition}[theorem]{Proposition}
\newtheorem{prop}[theorem]{Proposition}
\newtheorem{corollary}[theorem]{Corollary}

\theoremstyle{definition}

\newtheorem{defn}[theorem]{Definition}

\theoremstyle{remark}
\numberwithin{equation}{section}

 \DeclareMathAlphabet{\mathpzc}{OT1}{pzc}{m}{it}
 \DeclareMathAlphabet{\mathsfsl}{OT1}{cmss}{m}{sl}

\newcommand{\dif}{\mathrm{d}}

\newcommand{\abs}[1]{\left\vert#1\right\vert}
\newcommand{\set}[1]{\left\{#1\right\}}

\newcommand{\norm}[1]{\left\Vert#1\right\Vert}

\newcommand{\loc}{{\text{loc}}}

\newcommand{\RR}{\mathbb{R}}

 \newcommand{\Rnum}{\mathbb{R}}
 
 \newcommand{\Znum}{\mathbb{Z}}

\newcommand{\Be}{\begin{equation}}
\newcommand{\Ee}{\end{equation}}
\newcommand{\Bs}{\begin{split}}
\newcommand{\Es}{\end{split}}
\newcommand{\Bes}{\begin{equation*}}
\newcommand{\Ees}{\end{equation*}}
\newcommand{\BT}{\begin{thm}}
\newcommand{\ET}{\end{thm}}
\newcommand{\Bp}{\begin{proof}}
\newcommand{\Ep}{\end{proof}}
\newcommand{\BL}{\begin{lem}}
\newcommand{\EL}{\end{lem}}
\newcommand{\BP}{\begin{proposition}}
\newcommand{\EP}{\end{proposition}}
\newcommand{\BC}{\begin{corollary}}
\newcommand{\EC}{\end{corollary}}
\newcommand{\BR}{\begin{remark}}
\newcommand{\ER}{\end{remark}}
\newcommand{\BD}{\begin{defn}}
\newcommand{\ED}{\end{defn}}
\newcommand{\BI}{\begin{itemize}}
\newcommand{\EI}{\end{itemize}}

\allowdisplaybreaks

\begin{document}
\title[Gradient and Stability estimates of Fractional Operators]{ Gradient   and Stability Estimates of  Heat Kernels for Fractional Powers of Elliptic Operator }
\author[Y. Chen]{Yong Chen}
\address{School of Mathematics, Hunan University of Science and Technology, Xiangtan, 411201, Hunan, China}
\email{zhishi@pku.org.cn; chenyong77@gmail.com}
\author[Y. Hu]{Yaozhong Hu}
\address{Department of Mathematics, the University of Kansas, Lawrence, 66045, Kansas,USA}
\email{yhu@ku.edu}
\author[Z. Wang]{Zhi Wang}
\address{School of Sciences, Ningbo University of Technology,
 Ningbo 315211, Zhejiang, China}
\email{wangzhi1006@hotmail.com}
\begin{abstract}
Gradient  and      stability type estimates of heat kernel
associated with  fractional power of a uniformly elliptic operator
are obtained.    $L^p$-operator norm of  semigroups
associated with fractional power of two  uniformly elliptic operators
 are also obtained.
 \end{abstract}

\maketitle

\paragraph{\bf Keywords.} Gradient Estimates, Stability, Subordination, Fractional Powers.
\paragraph{MSC(2010):} 60J35,   47D07.
\section{Introduction and main conclusions}
Let $\mathbf{D}$ be a domain in $\RR^d$ and let $a:\RR^d\rightarrow \RR^{d^2}$ be a
matrix valued function with $C^\beta$ or measurable entries. The operator
$H=\nabla (a(x) \nabla )$ generated a semigroup $P_t$ which is given
by $P_tf(x)=\int_{\RR^d} p_t(x,y) f(y)dy$.  Heat kernel, gradient and stability estimates  associated with this semigroup are well-studied (see \cite{chz 98},
\cite{davies 1990}, \cite{stroock 1988}). In this paper we are concerned with
the similar estimates for the  semigroup generated by the
fractional powers of $H$, namely,
$Q_t=e^{-t(-H)^{\alpha}}$,  where $\alpha\in (0, 1)$
will be fixed throughout this paper. Our motivation
 is   recent
works  on fractional diffusion  in random environment (see \cite{chen 2007, hh} and references therein) arisen from super and sub diffusion  in random environment.
However, we shall  deal with this problem in separate project.

First let us recall  a result.
Using the classical Bromwich contour integral, Pollard in \cite{pollard 1946}    obtained  the following formula for the  inverse Laplace transform of  the function $e^{-u^{\alpha}}$.   
\begin{align}
   e^{-u^{\alpha}}&=\int_{0}^{\infty}e^{-u s} g(\alpha,\, s)\dif s,\qquad u\ge 0.
\label{invers formula}
\end{align}
where
\begin{align}
   g(\alpha,\, s)&=\frac{1}{\pi}\int_0^{\infty} e^{-s u}e^{-u^{\alpha} \cos \pi\alpha}\sin(u^{\alpha}\sin \pi\alpha)\,\dif u,\quad s\ge 0.
   \label{invers formula g form}
\end{align}
is a probability density function of $s\ge 0$.
This class of density functions $g(\alpha,s)$ is called strictly $\alpha$-stable law which plays important role in the theory of probability.

Denote
 \begin{equation}\label{g t x}
 \lambda_t(ds)=g_{_t}(\alpha,\, s)ds:= t^{-\frac{1}{\alpha}}g(\alpha,\, t^{-\frac{1}{\alpha}}s)ds.
 \end{equation}
 Then
 \begin{eqnarray*}
 e^{-u^{\alpha} t }
 &=&e^{-(ut^{1/\alpha})^\alpha}
 =\int_{0}^{\infty}e^{-ut^{1/\alpha}  s} g(\alpha,\, s)\dif s\\
 &=& \int_{0}^{\infty}e^{-u   s} g_t(\alpha,\, s)\dif s\\
 \end{eqnarray*}


From this identity we can define the semigroup
$Q_t=e^{-t(-H)^\alpha}$ associated with $-(-H)^\alpha$ as
\begin{equation}\label{qtof f}
  Q_tf(x)=\int_0^{\infty}P_sf(x) \,\lambda_t(\dif s)=\int_0^{\infty}P_sf(x) \,g_{_t}(\alpha,s)\dif s,\qquad f\in \mathbf{B}.
\end{equation}
Then, $\set{Q_t,\, t\ge 0}$ is also a strong continuous contraction semigroup on $\mathbf{B}$ and its infinitesimal generator satisfies that
\begin{equation*}
  -(-H)^\alpha f= \mathcal{M}f=\int_0^{\infty}\,(P_sf-f)\rho(\dif s)=\frac{\alpha}{\Gamma(1-\alpha)}\int_0^{\infty}\,\frac{ P_sf-f}{s^{1+\alpha}}\dif s,\qquad f\in \mathcal{D}(L).
\end{equation*}
Moreover, $\mathcal{D}(H)$ is a core of $\mathcal{M}$ which means that $\mathcal{D}(H)\subset \mathcal{D}(\mathcal{M})$ and the closure of $\mathcal{M}|_{\mathcal{D}(H)}$, the restriction of $\mathcal{M}$ to $\mathcal{D}(H)$, equals $\mathcal{M}$. In fact,
$H$ can be replaced by a more general operator.

   The transformation of $\set{P_t}$ to $\set{Q_t}$ is called subordination and
we  refer to \cite{sato 13} and the references therein.  

The main results of the present paper are 
gradient   and stability estimates of the heat kernels
associated with  the fractional power for uniformly elliptic operators.

\begin{thm}\label{uniform ell op bound 1}
Suppose that $\mathbf{D}$ is a bounded $C^2$ domain in $\Rnum^{d}$ and $H=\nabla(a(x)\nabla)$ on $\mathbf{D}$ where the matrix $a(x)$ has $C^{\beta},\,\beta\in (0,1)$ entries, and there exists a constant $\lambda\geqslant 1$ such that $\lambda^{-1}\mathrm{Id}_{d}\leqslant a(x) \leqslant \lambda \mathrm{Id}_{d}$. Then the heat kernel $q(t,x,y)$ of the fractional power of $H$, i.e.,$\mathcal{M}=-(-H)^{\alpha}$, exists and  has the following gradient estimates: \begin{equation}\label{gradient gaussian estimate}
   \abs{\nabla_x q(t,x,y)}\le {c_1}\Big(t^{-\frac{d+1}{2\alpha}}\wedge {\frac{ t }{\abs{x-y}^{d+1+2\alpha}}}\Big),\qquad\forall  (t,x,y)\in (0,\infty)\times \mathbf{D}\times \mathbf{D},
   \end{equation}where $c_1=c_1(\mathbf{D},d,\alpha,\beta)$ is a strictly positive constant.
\end{thm}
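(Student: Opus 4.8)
The plan is to derive the estimate \eqref{gradient gaussian estimate} from the classical Gaussian gradient bound for the parabolic kernel $p_t(x,y)$ by integrating against the subordinator density $g_{_t}(\alpha,s)$, in the spirit of the representation \eqref{qtof f}. Concretely, since $\mathbf{D}$ is a bounded $C^2$ domain and $a(x)$ has $C^\beta$ entries with the stated uniform ellipticity, the heat kernel $p_s(x,y)$ of $H$ (with, say, Dirichlet or the relevant boundary conditions making $P_t$ a contraction semigroup on $\mathbf{B}$) satisfies the well-known two-sided Gaussian bounds together with the first-order estimate
\begin{equation*}
  |\nabla_x p_s(x,y)| \le c\, s^{-\frac{d+1}{2}}\exp\Big(-\frac{|x-y|^2}{c\,s}\Big),\qquad 0<s\le 1,
\end{equation*}
and, for $s\ge 1$, an exponential-in-$s$ decay coming from the spectral gap on the bounded domain (see \cite{davies 1990}, \cite{stroock 1988}, \cite{chz 98}). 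First I would record that $q(t,x,y)=\int_0^\infty p_s(x,y)\,g_{_t}(\alpha,s)\,ds$ defines the kernel of $Q_t$, and that one may differentiate under the integral sign in $x$ (justified by the local Gaussian gradient bound and dominated convergence, uniformly for $x$ in compact subsets), giving
\begin{equation*}
  \nabla_x q(t,x,y)=\int_0^\infty \nabla_x p_s(x,y)\, g_{_t}(\alpha,s)\,ds .
\end{equation*}

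Next I would split the $s$-integral at the natural scale $s\sim t^{1/\alpha}$ and use the scaling $g_{_t}(\alpha,s)=t^{-1/\alpha}g(\alpha,t^{-1/\alpha}s)$ from \eqref{g t x}. The two competing bounds in \eqref{gradient gaussian estimate} arise as follows. For the ``on-diagonal'' bound $t^{-(d+1)/(2\alpha)}$: insert $|\nabla_x p_s(x,y)|\le c\,s^{-(d+1)/2}$ and compute $\int_0^\infty s^{-(d+1)/2} g_{_t}(\alpha,s)\,ds = t^{-(d+1)/(2\alpha)}\int_0^\infty \sigma^{-(d+1)/2} g(\alpha,\sigma)\,d\sigma$ after the substitution $\sigma=t^{-1/\alpha}s$; the remaining integral is a finite negative moment of the stable density $g(\alpha,\cdot)$, which is finite because $g(\alpha,\sigma)\sim c\,\sigma^{-1-\alpha}$ as $\sigma\to\infty$ is irrelevant here while near $\sigma=0$ the density $g(\alpha,\sigma)$ decays faster than any power (indeed like $\exp(-c\,\sigma^{-\alpha/(1-\alpha)})$), so all negative moments converge. (One should also fold in the large-$s$ exponential decay of $p_s$ so that the large-$\sigma$ tail of $g(\alpha,\cdot)$, which is only $\sigma^{-1-\alpha}$, causes no trouble; since $t$ ranges over all of $(0,\infty)$, for large $t$ this is where the cutoff matters and the bounded-domain spectral gap saves the day.) For the ``off-diagonal'' bound $t\,|x-y|^{-(d+1+2\alpha)}$: use $|\nabla_x p_s(x,y)|\le c\,s^{-(d+1)/2}\exp(-|x-y|^2/(cs))$ and bound $s^{-(d+1)/2}\exp(-|x-y|^2/(cs)) \le c_N\, s^{N}|x-y|^{-(d+1)-2N}$ for a suitable $N$; choosing $N$ so that $-( d+1)-2N$ matches $-(d+1+2\alpha)$, i.e. $N=\alpha$ (or an integer just above, then interpolate), leaves $\int_0^\infty s^{\alpha} g_{_t}(\alpha,s)\,ds$, which by the same scaling equals $t\cdot\int_0^\infty \sigma^{\alpha} g(\alpha,\sigma)\,d\sigma$ — but $\int \sigma^\alpha g(\alpha,\sigma)d\sigma=\infty$ since $g(\alpha,\sigma)\sim c\sigma^{-1-\alpha}$. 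This is the crux: one must truncate. Split at $s=|x-y|^{2/\ldots}$ or rather handle the two regimes $s\le |x-y|^{2}$ and $s>|x-y|^2$ separately, using the Gaussian factor to kill the small-$s$ part and the tail estimate $g_{_t}(\alpha,s)\le c\,t\,s^{-1-\alpha}$ (valid for $s$ large, from $g(\alpha,\sigma)\le c\sigma^{-1-\alpha}$) together with $|\nabla_x p_s|\le c s^{-(d+1)/2}$ for the large-$s$ part, giving $\int_{|x-y|^2}^\infty s^{-(d+1)/2}\cdot t s^{-1-\alpha}\,ds\sim t\,|x-y|^{2(-(d+1)/2-\alpha)}=t|x-y|^{-(d+1)-2\alpha}$, as desired; the small-$s$ part $\int_0^{|x-y|^2} s^{-(d+1)/2}e^{-|x-y|^2/(cs)} g_{_t}(\alpha,s)\,ds$ is even smaller after bounding $g_{_t}\le c\, t\, s^{-1-\alpha}$ (or using $g_{_t}\le t^{-1/\alpha}\|g(\alpha,\cdot)\|_\infty$ and the super-exponential decay) and estimating the resulting elementary integral.

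Assembling, one obtains $|\nabla_x q(t,x,y)|\le c_1(t^{-(d+1)/(2\alpha)}\wedge t|x-y|^{-(d+1+2\alpha)})$ with $c_1$ depending only on $\mathbf D,d,\alpha,\beta$ through the Gaussian constants. The main obstacle, and the place requiring real care rather than routine computation, is the \emph{uniformity in $t\in(0,\infty)$}: the pointwise Gaussian gradient bound for $p_s$ on a bounded domain is clean for $s$ bounded, but for large $s$ one needs the correct long-time behavior (exponential decay governed by the principal Dirichlet eigenvalue), and one must check that after subordination the competition between this exponential decay and the heavy $\sigma^{-1-\alpha}$ tail of the stable density still reproduces exactly the polynomial rates claimed, with no loss. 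A secondary technical point is rigorously justifying differentiation under the integral and the very existence/regularity of $q(t,x,y)$ and $\nabla_x q$; this follows from the interior Schauder/gradient estimates for $p_s$ (here $C^\beta$ coefficients are used) combined with dominated convergence, and should be stated as a preliminary lemma before the main estimate.
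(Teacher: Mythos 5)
Your overall strategy is the same as the paper's: represent $q$ by subordination as in \eqref{qtxy}, differentiate under the integral, feed in a Gaussian gradient bound for $\nabla_x p$, and extract the two competing bounds from the asymptotics of the stable density $g(\alpha,\cdot)$. The paper reaches this more cleanly, though, in two ways. First, it quotes Zhang \cite{zhangqi 96,zhangqi 97} for $\abs{\nabla_x p(t,x,y)}\le M t^{-(d+1)/2}\exp(-\abs{x-y}^2/(Mt))$ valid for \emph{all} $t\in(0,\infty)$ on a bounded $C^2$ domain with $C^\beta$ coefficients; with that in hand, your worries about the large-$s$ regime, the spectral gap, and ``uniformity in $t$'' evaporate. (Your spectral-gap remark is in fact the reason the Gaussian bound extends to all $t$ on a bounded domain, but you leave it as a flagged concern rather than carrying it through; the paper simply avoids the issue by citing the right estimate.) Second, for the off-diagonal bound the paper does not split the $s$-integral at $\abs{x-y}^2$: it inserts the global pointwise bound $g(\alpha,s)\le \hat c\, s^{-1-\alpha}$ on all of $[0,\infty)$ (which follows immediately from \eqref{asy formulae 1}--\eqref{asy formulae 2}, since $s\mapsto s^{1+\alpha}g(\alpha,s)$ is continuous with finite limits at $0$ and $\infty$), keeps the Gaussian factor, and computes the resulting integral exactly via $r=1/s$ and the $\Gamma$-function, obtaining $\Gamma(\alpha+\tfrac{\ell}{2})\bigl(Mt^{1/\alpha}/\abs{x-y}^2\bigr)^{\alpha+\ell/2}$ in one line. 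Your split-and-bound version is mathematically equivalent, just longer. Your first off-diagonal attempt---bounding the exponential by $s^{N}\abs{x-y}^{-(d+1)-2N}$ with $N=\alpha$---correctly fails because $\int s^\alpha g(\alpha,s)\,ds=\infty$; the fix you then propose is sound, but the lesson is that one should use the exact tail $g(\alpha,s)\sim B s^{-1-\alpha}$ \emph{together with} the surviving Gaussian factor, which is precisely what the paper's one-shot $\Gamma$-function computation accomplishes. In short: correct idea, essentially the paper's route, but with extraneous technical detours that the paper's choice of reference estimate and of the global bound $g(\alpha,s)\le\hat c s^{-1-\alpha}$ make unnecessary.
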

\begin{thm}\label{stability thm}
Suppose that $\mathbf{D}=\Rnum^d$ and $H=\nabla(a(x)\nabla),\,\tilde{H}=\nabla(\tilde{a}(x)\nabla)$ on $\mathbf{D}$ with measurable coefficients. If there exists a constant $\lambda\geqslant 1$ such that $\lambda^{-1}\mathrm{Id}_{d}\leqslant a(x)  \leqslant \lambda \mathrm{Id}_{d}$ and $\lambda^{-1}\mathrm{Id}_{d}\leqslant \tilde{a}(x) \leqslant \lambda \mathrm{Id}_{d}$, then their subordinated semigroups $\set{Q_t},\,\set{\tilde{Q}_t}$ and the corresponding heat kernels $q(t,x,y),\,\tilde{q}(t,x,y)$ satisfy the following stability estimate: there exist bounded,   continuous functions $F_1(t,z),\,F_2(t,z)$ on $(0,\infty)\times (0,\infty)$ with $\lim_{z\to 0}F_i(t,z)=0,\,i=1,2$ for each $t>0$, 
\begin{align}
  \norm{Q_t-\tilde{Q}_t}_{p}&\leqslant F_1(t,\,\norm{a-\tilde{a}}_{L^2_{loc}}),\qquad \forall p\in [0,\infty]\label{stability qt}\\
  \abs{q(t,x,y)-\tilde{q}(t,x,y)}&\leqslant F_2(t,\,\norm{a-\tilde{a}}_{L^2_{loc}}).\label{stability kernel}
\end{align} where
\begin{align*}
   \norm{a-\tilde{a}}_{L^2_{loc}}=\sup_{\mathbf{k}\in \Znum^d}\sum_{i,j=1}^d \norm{a_{ij}-\tilde{a}_{ij}}_{L^2(D_{\mathbf{k}})},\\
   D_{\mathbf{k}}=\set{\mathbf{x}\in \Rnum^d:\,\abs{\mathbf{x}-\mathbf{k}}<2\sqrt{d}}.
\end{align*}
See below (\ref{stability qt norm}) and (\ref{stability kernel}) for the explicit expression of the functions $F_1(t,z),\,F_2(t,z)$.
\end{thm}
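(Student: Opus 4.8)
The plan is to deduce both estimates from the already-understood stability of the \emph{local} heat semigroups $\{P_s\}$, $\{\tilde P_s\}$ --- for which $L^p$-operator and pointwise kernel stability under an $L^2_{loc}$-perturbation of the coefficients is classical (see \cite{chz 98}, \cite{stroock 1988}, \cite{davies 1990}) --- by integrating against the stable subordinator density $g_t(\alpha,\cdot)$. Two ingredients are needed. The first is the subordination identities: from \eqref{qtof f} and linearity, $Q_t-\tilde Q_t=\int_0^\infty (P_s-\tilde P_s)\,g_t(\alpha,s)\,ds$ on $L^p$, and, after inserting the kernels and applying Fubini (legitimate since $p_s,\tilde p_s\ge 0$ and $\int_0^\infty\!\int_{\Rnum^d} p_s(x,y)g_t(\alpha,s)\,dy\,ds=1$), the kernel identity $q(t,x,y)=\int_0^\infty p_s(x,y)\,g_t(\alpha,s)\,ds$, whence $q(t,x,y)-\tilde q(t,x,y)=\int_0^\infty(p_s(x,y)-\tilde p_s(x,y))\,g_t(\alpha,s)\,ds$. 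The second is the elliptic stability estimate, used in the form: there are $\varphi_1(s,z),\varphi_2(s,z)$ with $\varphi_1\le 2$ and $s^{d/2}\varphi_2(s,z)$ bounded, such that for each fixed $s>0$ one has $\varphi_i(s,z)\to 0$ as $z\to 0$, and $\|P_s-\tilde P_s\|_p\le \varphi_1(s,\|a-\tilde a\|_{L^2_{loc}})$, $|p_s(x,y)-\tilde p_s(x,y)|\le\varphi_2(s,\|a-\tilde a\|_{L^2_{loc}})$ uniformly in $x,y$.

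\textbf{Assembling the bounds and checking the properties of $F_i$.} Set
\[
  F_1(t,z):=\int_0^\infty\varphi_1(s,z)\,g_t(\alpha,s)\,ds,\qquad
  F_2(t,z):=\int_0^\infty\varphi_2(s,z)\,g_t(\alpha,s)\,ds .
\]
Both are finite: $F_1\le 2$ since $\int_0^\infty g_t(\alpha,s)\,ds=1$; and $F_2(t,z)\le C\int_0^\infty s^{-d/2}g_t(\alpha,s)\,ds=Ct^{-d/(2\alpha)}\int_0^\infty u^{-d/2}g(\alpha,u)\,du<\infty$, the last integral converging because $g(\alpha,\cdot)$ decays faster than any power at the origin and like $s^{-1-\alpha}$ at infinity. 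Applying the triangle inequality to the two subordination identities together with the elliptic estimate gives \eqref{stability qt} and \eqref{stability kernel} with these $F_1,F_2$, which are the explicit expressions referred to in the statement. Continuity of $F_i$ in $(t,z)$ and the vanishing $\lim_{z\to 0}F_i(t,z)=0$ both follow from dominated convergence: $(t,s)\mapsto g_t(\alpha,s)=t^{-1/\alpha}g(\alpha,t^{-1/\alpha}s)$ is jointly continuous, the integrands are dominated, uniformly for $t$ in a compact set, by $2g_t(\alpha,s)$ (resp.\ $Cs^{-d/2}g_t(\alpha,s)$), which are integrable, and $\varphi_i(s,z)\to 0$ as $z\to 0$ for every fixed $s$.

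\textbf{The main obstacle.} The only substantive input is the elliptic stability estimate used above. If it is quoted from \cite{chz 98} or \cite{stroock 1988} the remaining work is the bookkeeping just described; if it is re-derived, the technical heart is there, and I would argue via the Duhamel-type identity
\[
  p_s(x,y)-\tilde p_s(x,y)=-\int_0^s\!\int_{\Rnum^d}\nabla_z p_{s-r}(x,z)\cdot(a-\tilde a)(z)\,\nabla_z\tilde p_r(z,y)\,dz\,dr ,
\]
combined with the weighted $L^2$ gradient bound $\int_{\Rnum^d}|\nabla_z p_r(x,z)|^2 e^{c|x-z|^2/r}\,dz\le Cr^{-1-d/2}$, the decomposition of $\Rnum^d$ into the cubes $D_{\mathbf k}$, and Cauchy--Schwarz on each cube. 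Since for merely measurable coefficients $\nabla p$ need not be bounded, the product of two gradients cannot be paired with the $L^2$-small perturbation directly; on each cube one splits into $\{|a-\tilde a|\le\eta\}$ and its complement (of measure $\le\eta^{-2}\|a-\tilde a\|_{L^2(D_{\mathbf k})}^2$ by Chebyshev), applies Meyers' higher-integrability estimate $\nabla p\in L^{2+\epsilon}_{loc}$ on the complement, and optimises in $\eta$; summing over $\mathbf k$ against the Gaussian weights preserves uniformity in $(x,y)$ and produces an $s$-dependent prefactor times a power of $\|a-\tilde a\|_{L^2_{loc}}$, whose integrability against $g_t(\alpha,s)\,ds$ is then checked as above. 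A secondary point is justifying the Duhamel identity itself for non-smooth $a,\tilde a$, which is done by interior Caccioppoli estimates and mollification.
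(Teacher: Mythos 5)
The proposal is correct and takes essentially the same route as the paper: both reduce to the $L^p$-operator and pointwise kernel stability estimates of \cite{chz 98} (Theorems 1.1 and 1.2) for the base semigroups $\{P_s\},\{\tilde P_s\}$, integrate against the subordinator density $g_t(\alpha,\cdot)$ via \eqref{qtof f} and \eqref{qtxy}, and use the contraction/Aronson bounds to cap $F_1$, $F_2$ so that they remain bounded while vanishing as $z\to0$. The Duhamel-identity sketch at the end is an optional re-derivation of the cited elliptic input, not part of the paper's argument, which simply quotes \cite{chz 98}.
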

\begin{remark}
 Similarly, we can show that the inequality (\ref{stability qt}) is still valid for $\mathbf{D}$ is a bounded $C^1$-smooth domain in $\Rnum^{d}$.
\end{remark}
\section{Proof of the main theorems}
\subsection{Preliminaries:  Heat kernel of the subordination semigroup}
The asymptotic behaviors of $g(\alpha, s)$ when $s\to 0$ and when $s\to\infty$ have been known. See for example \cite{sato 13} Equality (14.35) for $s\to 0$ and Equality (14.37) for $s\to \infty$. For the convenience of readers we
  recall  these asymptotic formulae in following proposition.
\begin{proposition}\label{linnik storhod min}
The function $g(\alpha,\, s)$ has the following asymptotic formulae:
\begin{eqnarray}
     && g(\alpha,\, s) \sim Ks^{-\frac{2-\alpha}{2-2\alpha}}\exp(-As^{-\frac{\alpha}{1-\alpha}}),\quad \text{for} \quad s\to 0+,\label{asy formulae 1}\\
 & &    g(\alpha,\, s) \sim Bs^{-1-\alpha},\quad \text{for} \quad s\to \infty.\label{asy formulae 2}
\end{eqnarray}
   where $A,\,K$ and $B$ are constants only depending on $\alpha$.
\end{proposition}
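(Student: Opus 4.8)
\textbf{Proof plan for Proposition \ref{linnik storhod min}.}

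The statement is essentially a classical fact about the strictly $\alpha$-stable density, and the cleanest route is to extract it directly from the integral representation \eqref{invers formula g form} by the saddle-point (Laplace) method, rather than quoting \cite{sato 13} verbatim. The plan is as follows. First I would rewrite \eqref{invers formula g form} as the imaginary part of a contour integral: since $e^{-su}e^{-u^\alpha\cos\pi\alpha}\sin(u^\alpha\sin\pi\alpha)=\IM\,e^{-su-u^\alpha e^{\im\pi\alpha}}$ and $e^{\im\pi\alpha}$ is the analytic continuation of $(-1)^\alpha$, we get $g(\alpha,s)=\tfrac1\pi\IM\int_0^\infty e^{-su-u^\alpha e^{\im\pi\alpha}}\,\dif u$, and after rotating the contour one can write $g(\alpha,s)=\tfrac1{2\pi\im}\int_{\gamma}e^{su+u^\alpha}\,\dif u$ for a suitable Hankel-type contour $\gamma$ (this is precisely Pollard's Bromwich setup run backwards). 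This representation makes both asymptotics a matter of locating the critical point of the exponent.

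For the tail $s\to\infty$ in \eqref{asy formulae 2}, I would not even need the saddle point: expand $e^{-u^\alpha\cos\pi\alpha}\sin(u^\alpha\sin\pi\alpha)=u^\alpha\sin\pi\alpha+O(u^{2\alpha})$ near $u=0$, substitute into \eqref{invers formula g form}, and apply a Tauberian/Watson-lemma argument to $\int_0^\infty e^{-su}u^\alpha\,\dif u=\Gamma(1+\alpha)s^{-1-\alpha}$; this yields $B=\tfrac1\pi\Gamma(1+\alpha)\sin\pi\alpha=1/\Gamma(-\alpha)$ up to the reflection formula, with the higher-order terms contributing $o(s^{-1-\alpha})$. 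For the small-$s$ behaviour \eqref{asy formulae 1}, the exponent in the contour form is $\phi(u)=su+u^\alpha$; setting $\phi'(u)=s+\alpha u^{\alpha-1}=0$ gives the (complex) critical point $u_*$ with $|u_*|$ proportional to $s^{-1/(1-\alpha)}$, and evaluating $\phi(u_*)$ produces the dominant exponential factor $\exp(-A s^{-\alpha/(1-\alpha)})$ with $A=(1-\alpha)\alpha^{\alpha/(1-\alpha)}$ (up to the explicit trigonometric constant coming from the phase of $u_*$), while the Gaussian fluctuation integral $\int e^{\tfrac12\phi''(u_*)(u-u_*)^2}\dif u$ contributes the algebraic prefactor $K s^{-(2-\alpha)/(2-2\alpha)}$ after collecting the power of $s$ from $\phi''(u_*)^{-1/2}$ and from the Jacobian.

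The main obstacle is the rigorous justification of the steepest-descent estimate for $s\to 0$: one must deform $\gamma$ through the critical point $u_*$, check that the contribution away from a shrinking neighbourhood of $u_*$ is exponentially negligible relative to $\exp(-A s^{-\alpha/(1-\alpha)})$, and control the error in the quadratic approximation of $\phi$ uniformly as $s\to 0$ (the natural scaling of the neighbourhood is $|u-u_*|\lesssim |u_*|^{1-\alpha/2}$, which one verifies makes the cubic remainder $o(1)$). Since the proposition only asserts the asymptotic equivalence ``$\sim$'' and says the constants $A,K,B$ depend only on $\alpha$, I would not chase their exact closed forms beyond what falls out of the saddle-point computation; alternatively, one may simply cite \cite{sato 13}, Equalities (14.35) and (14.37), and remark that the statement above is a verbatim transcription, which is the shortest honest proof given that the excerpt already attributes the formulae there.
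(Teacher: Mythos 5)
The paper does not actually prove Proposition \ref{linnik storhod min}: it simply attributes \eqref{asy formulae 1} and \eqref{asy formulae 2} to \cite{sato 13}, Equalities (14.35) and (14.37), and restates them for the reader's convenience. Your primary route — extracting the asymptotics directly from \eqref{invers formula g form} via Watson's lemma for $s\to\infty$ and steepest descent for $s\to 0+$ — is therefore a genuinely different and more self-contained argument than the one the paper takes, and it is sound. For the tail, expanding $\sin(u^\alpha\sin\pi\alpha)\approx u^\alpha\sin\pi\alpha$ near $u=0$ and integrating against $e^{-su}$ does give $B=\frac{1}{\pi}\Gamma(1+\alpha)\sin\pi\alpha=\alpha/\Gamma(1-\alpha)$, and your saddle-point bookkeeping for $s\to 0$ correctly places the critical point at $|u_*|\asymp s^{-1/(1-\alpha)}$, yields $A=(1-\alpha)\alpha^{\alpha/(1-\alpha)}$, and produces the prefactor $s^{-(2-\alpha)/(2-2\alpha)}$ from $|\phi''(u_*)|^{-1/2}\asymp u_*^{1-\alpha/2}$. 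Two small sign slips are worth flagging, though neither affects the conclusion: $\IM e^{-su-u^\alpha e^{\im\pi\alpha}}$ equals $-e^{-su}e^{-u^\alpha\cos\pi\alpha}\sin(u^\alpha\sin\pi\alpha)$, so either a minus sign or the conjugate phase $e^{-\im\pi\alpha}$ is needed; and in the Bromwich form the exponent should read $sz-z^\alpha$ rather than $su+u^\alpha$, which is what makes the saddle equation $s-\alpha z^{\alpha-1}=0$ solvable on the positive real axis. You also correctly identify the one genuinely delicate step (rigorous localization of the steepest-descent contribution as $s\to0$), and you correctly observe that the shortest route compatible with the paper's own wording is the bare citation to \cite{sato 13} — which is in fact exactly what the paper does. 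Net assessment: your proposal is correct and strictly more informative than the paper's treatment, at the cost of the steepest-descent technicalities.
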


It is known that if $\set{P_t}$ has a positive kernel then so does $\set{Q_t}$, see for example Lemma 3.4.1 of
\cite{davies 1990} and Lemma 5.4 of \cite{grigor 03}. We restate it as the following proposition.
\begin{prop}\label{heat estamate 0}
Suppose that $\mathbf{D}=\Rnum^d$ and $H=\nabla(a(x)\nabla),\,\tilde{H}=\nabla(\tilde{a}(x)\nabla)$ on $\mathbf{D}$ with measurable coefficients and suppose  there exists a constant $\lambda\geqslant 1$ such that $\lambda^{-1}\mathrm{Id}_{d}\leqslant a(x)  \leqslant \lambda \mathrm{Id}_{d}$ and $\lambda^{-1}\mathrm{Id}_{d}\leqslant \tilde{a}(x) \leqslant \lambda \mathrm{Id}_{d}$.
Then $\set{Q_t}$ also has a positive kernel $q(t,x,y)$ on $(0,\infty)\times \mathbf{D} \times \mathbf{D}$ such that
    \begin{equation}\label{qtxy}
      q(t,x,y)= \int_ 0^\infty  p(s,x,y) \,\lambda_t(\dif s)=\int_ 0^\infty  p(t^{\frac{1}{\alpha}}s,x,y)g(\alpha,\, s) \dif s.
    \end{equation}
\end{prop}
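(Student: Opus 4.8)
The plan is to verify that the formula \eqref{qtxy} genuinely defines a positive jointly measurable kernel for $\{Q_t\}$ by combining the subordination identity \eqref{qtof f} with the known two‑sided Gaussian bounds (Aronson's estimates) for the parabolic kernel $p(s,x,y)$ of $H$. First I would recall that, since $a(x)$ is measurable and uniformly elliptic with constant $\lambda$, the operator $H=\nabla(a(x)\nabla)$ generates a strongly continuous sub‑Markovian semigroup $P_s$ on $L^2(\mathbf{D})$ that admits a strictly positive, jointly continuous kernel $p(s,x,y)$ satisfying Aronson's bounds
\begin{equation*}
 c_1 s^{-d/2}\exp\!\Bigl(-\frac{|x-y|^2}{c_2 s}\Bigr)\le p(s,x,y)\le c_3 s^{-d/2}\exp\!\Bigl(-\frac{|x-y|^2}{c_4 s}\Bigr),\qquad s>0,\ x,y\in\mathbf{D},
\end{equation*}
with $c_i=c_i(d,\lambda)$; this is the input I would cite (e.g.\ from \cite{davies 1990}, or Aronson's original work). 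In particular $p$ is positive, symmetric, and $\int_{\mathbf{D}}p(s,x,y)\,dy\le 1$.

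Next I would define $q(t,x,y)$ by the right‑hand side of \eqref{qtxy}, namely $q(t,x,y)=\int_0^\infty p(t^{1/\alpha}s,x,y)\,g(\alpha,s)\,ds$, and check the following points in order. (i) \emph{Well‑definedness and finiteness:} since $g(\alpha,\cdot)\ge 0$ integrates to $1$ and $p(r,x,y)\le c_3 r^{-d/2}$, near $s=\infty$ the integrand is dominated by $c_3 (t^{1/\alpha}s)^{-d/2}g(\alpha,s)$, which is integrable by the tail asymptotics \eqref{asy formulae 2}; near $s=0$ the factor $(t^{1/\alpha}s)^{-d/2}$ blows up but is killed by the super‑exponential decay of $g(\alpha,s)$ from \eqref{asy formulae 1}. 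Hence $q(t,x,y)<\infty$ for all $(t,x,y)\in(0,\infty)\times\mathbf{D}\times\mathbf{D}$. (ii) \emph{Positivity:} the integrand is strictly positive on a set of positive $s$‑measure, so $q(t,x,y)>0$. (iii) \emph{Measurability/continuity:} joint continuity of $(s,x,y)\mapsto p(s,x,y)$ plus the dominated convergence theorem (with the dominating functions just exhibited) gives joint continuity of $q$ on $(0,\infty)\times\mathbf{D}\times\mathbf{D}$.

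Finally I would confirm that $q$ represents $Q_t$: for $f\in\mathbf{B}$ (bounded Borel), apply Tonelli's theorem to interchange the $\lambda_t(ds)$‑integral in \eqref{qtof f} with the spatial integral $\int_{\mathbf{D}}p(s,x,y)f(y)\,dy$, which is legitimate because everything is nonnegative after splitting $f$ into positive and negative parts and the bounds above make the double integral absolutely convergent; this yields
\begin{equation*}
 Q_tf(x)=\int_0^\infty\!\!\int_{\mathbf{D}}p(s,x,y)f(y)\,dy\,\lambda_t(ds)=\int_{\mathbf{D}}\Bigl(\int_0^\infty p(s,x,y)\,\lambda_t(ds)\Bigr)f(y)\,dy=\int_{\mathbf{D}}q(t,x,y)f(y)\,dy,
\end{equation*}
and the change of variables $s\mapsto t^{1/\alpha}s$ using \eqref{g t x} gives the second expression in \eqref{qtxy}. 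The same argument applies verbatim to $\tilde H$, producing $\tilde q(t,x,y)$. I expect the only genuinely delicate point to be the integrability near $s=0$: one must use the precise sub‑exponential rate $\exp(-As^{-\alpha/(1-\alpha)})$ from Proposition~\ref{linnik storhod min} to dominate the $s^{-d/2}$ singularity coming from the small‑time Gaussian bound on $p$; everything else is a routine application of Tonelli/dominated convergence and the cited Aronson estimates. (Alternatively, one can bypass the explicit asymptotics by invoking the abstract result that subordination of a semigroup with a positive kernel again has a positive kernel, as in Lemma~3.4.1 of \cite{davies 1990} or Lemma~5.4 of \cite{grigor 03}, and then identify the kernel via the same Tonelli computation.)
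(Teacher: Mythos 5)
Your proposal is correct, but it is substantially more detailed than the paper's own argument and takes a different primary route. The paper treats the proposition essentially as a restatement of a known fact---it explicitly cites Lemma~3.4.1 of \cite{davies 1990} and Lemma~5.4 of \cite{grigor 03} (in the paragraph just before the proposition) for the abstract statement that subordination of a semigroup with a positive kernel again yields one with a positive kernel---and then its ``proof'' consists only of the algebraic identification: plug the kernel representation of $P_s$ into \eqref{qtof f}, invoke the scaling definition \eqref{g t x} of $\lambda_t$, and change variables $s' = t^{-1/\alpha}s$ to arrive at \eqref{qtxy}. By contrast, you carry out the verification directly: you invoke Aronson's two-sided Gaussian bounds, use the small-$s$ super-exponential decay \eqref{asy formulae 1} and the large-$s$ tail \eqref{asy formulae 2} of $g(\alpha,\cdot)$ to establish finiteness of the defining integral, argue positivity and joint continuity of $q$, and justify the Fubini/Tonelli interchange by absolute convergence before doing the same change of variables. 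Your approach buys self-containedness (no reliance on an external abstract lemma), at the cost of more machinery; the paper's approach is faster because the only content it adds to the cited lemma is the explicit change-of-variables formula. You in fact flag the paper's route as an alternative in your final parenthetical, so you clearly understood both options. One small point worth noting: for the integrability near $s=0$ you could have avoided the fine asymptotics \eqref{asy formulae 1} entirely by simply observing that the Gaussian factor $\exp(-|x-y|^2/(c\,t^{1/\alpha}s))$ already kills the $s^{-d/2}$ singularity when $x\ne y$, and that the on-diagonal case $x=y$ is handled by the decay of $g$; but using the asymptotics as you do is of course also fine and gives uniform control including on the diagonal.
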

\begin{proof}
Since $p(t,x,y)$ is the positive kernel of $\set{P_t}$, it follows from Eq.(\ref{qtof f})
\begin{align*}
    q(t,x,y)&=\int_ 0^\infty p(s,x,y) \,\lambda_t(\dif s)\nonumber \\
 &=t^{-\frac{1}{\alpha}}\int_ 0^\infty  p(s,x,y)g(\alpha,\, t^{-\frac{1}{\alpha}} s) \dif s\qquad \text{(by Eq.(\ref{g t x}))}\nonumber \\
   &=\int_ 0^\infty  p(t^{\frac{1}{\alpha}}s',x,y)g(\alpha,\, s') \dif s'.
\end{align*}
This completes the proof.
\end{proof}
As a direct corollary of the above two propositions, we have the following results.
\begin{thm}\label{t.heat}
 Suppose that $\mathbf{D}$ is a domain in $\Rnum^{d}$ and $H=\nabla(a(x)\nabla)$ on $\mathbf{D}$ with measurable coefficients. If there exists a constant $\lambda\geqslant 1$ such that $\lambda^{-1}\mathrm{Id}_{d}\leqslant a(x)  \leqslant \lambda \mathrm{Id}_{d}$, then the heat kernel $q(t,x,y)$ of the fractional power of $H$, i.e.,$\mathcal{M}=-(-H)^{\alpha}$, has the following Nash's H\"{o}lder estimates: there are constants $c=c(d,\lambda,\alpha)>1$ and $\gamma=\gamma(d,\lambda)\in (0,1)$ such that
\begin{empheq}[left=\empheqlbrace]{align}
 &     \frac{1}{c}\Big(t^{-\frac{d}{2\alpha}}\wedge {\frac{ t}{\abs{y-x}^{d+2\alpha}}}\Big)\le q(t,x,y)\le {c}\Big(t^{-\frac{d}{2\alpha}}\wedge {\frac{ t}{\abs{y-x}^{d+2\alpha}}}\Big)\\
&  \abs{q(t,x,y)-q(t,x_1,y_1)}\leqslant c t^{-\frac{2d-\gamma}{2\alpha}}\big(\abs{x-x_1}\vee \abs{y-y_1}\big)^{\gamma},
 \end{empheq}  for all $t>0$ and $ (x,y),\,(x_1,y_1)\in \mathbf{D}\times \mathbf{D}$.
\end{thm}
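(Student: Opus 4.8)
The plan is to derive Theorem \ref{t.heat} as a consequence of the subordination formula (\ref{qtxy}) together with the classical Aronson-Nash-De Giorgi estimates for the second-order operator $H$. Recall that under the uniform ellipticity hypothesis $\lambda^{-1}\mathrm{Id}_d \le a(x) \le \lambda\,\mathrm{Id}_d$, the heat kernel $p(s,x,y)$ of $P_s=e^{sH}$ satisfies two-sided Gaussian bounds
\begin{equation*}
 \frac{1}{C}\, s^{-d/2}\exp\!\Big(-\frac{C\abs{x-y}^2}{s}\Big) \le p(s,x,y) \le C\, s^{-d/2}\exp\!\Big(-\frac{\abs{x-y}^2}{Cs}\Big),
\end{equation*}
together with a De Giorgi-Nash H\"older continuity estimate of the form $\abs{p(s,x,y)-p(s,x_1,y_1)} \le C s^{-d/2}\big(s^{-1/2}(\abs{x-x_1}\vee\abs{y-y_1})\big)^{\gamma} \exp(-\abs{x-y}^2/Cs)$ valid when the increments are bounded by $\sqrt{s}$ (for larger increments one simply uses the triangle inequality and the pointwise bound). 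These are standard; I would cite \cite{davies 1990} or \cite{stroock 1988}. The whole proof is then a matter of inserting these bounds into $q(t,x,y)=\int_0^\infty p(t^{1/\alpha}s,x,y)\,g(\alpha,s)\,\dif s$ and estimating the resulting $s$-integral using the asymptotics of $g(\alpha,s)$ from Proposition \ref{linnik storhod min}.

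For the two-sided bound (the first displayed inequality), I would substitute the Gaussian bounds on $p$ and reduce to estimating, up to constants, the integral $I(t,r)=\int_0^\infty (t^{1/\alpha}s)^{-d/2}\exp(-c\, r^2/(t^{1/\alpha}s))\, g(\alpha,s)\,\dif s$ with $r=\abs{x-y}$, both from above and from below. A scaling substitution $s \mapsto t^{-1/\alpha}r^2 u$ or simply rescaling by $t$ shows $I(t,r)$ is comparable to $t^{-d/(2\alpha)}$ times a function of the single variable $r/t^{1/(2\alpha)}$; alternatively one bounds it directly. The key analytic input is that $\int_0^\infty s^{-d/2} g(\alpha,s)\,\dif s$ converges (this uses the $s\to 0$ asymptotic (\ref{asy formulae 1}), where the stretched-exponential factor $\exp(-As^{-\alpha/(1-\alpha)})$ kills any polynomial blow-up), giving the short-range bound $q(t,x,y) \lesssim t^{-d/(2\alpha)}$; and for the long-range regime one uses the heavy tail (\ref{asy formulae 2}), $g(\alpha,s)\sim B s^{-1-\alpha}$, to extract the polynomial decay: the dominant contribution to $I(t,r)$ comes from $s \approx r^2/t^{1/\alpha}$, and substituting $g(\alpha,s)\approx B s^{-1-\alpha}$ there yields $I(t,r) \asymp t \,r^{-(d+2\alpha)}$. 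The lower bound follows the same way since all bounds are two-sided. This is the standard mechanism by which an $\alpha$-stable subordination converts Gaussian decay into the polynomial decay $t/\abs{x-y}^{d+2\alpha}$ characteristic of the fractional Laplacian's kernel.

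For the H\"older estimate (the second displayed inequality) I would split according to whether $\delta := \abs{x-x_1}\vee\abs{y-y_1}$ is small or large relative to $t^{1/(2\alpha)}$. If $\delta \ge t^{1/(2\alpha)}$, the bound is trivial from the pointwise estimate just proved (the exponent $t^{-(2d-\gamma)/(2\alpha)}\delta^\gamma$ then dominates $t^{-d/(2\alpha)}$). If $\delta < t^{1/(2\alpha)}$, I insert the De Giorgi-Nash increment bound for $p$ into the subordination integral: $\abs{q(t,x,y)-q(t,x_1,y_1)} \le \int_0^\infty \abs{p(t^{1/\alpha}s,x,y)-p(t^{1/\alpha}s,x_1,y_1)}\, g(\alpha,s)\,\dif s$, bounded by $C\,\delta^\gamma \int_0^\infty (t^{1/\alpha}s)^{-(d+\gamma)/2}\, g(\alpha,s)\,\dif s$ on the region where $\delta \le (t^{1/\alpha}s)^{1/2}$, and by the triangle-inequality argument on the complementary small-$s$ region. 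The first integral equals $C\,\delta^\gamma\, t^{-(d+\gamma)/(2\alpha)}\int_0^\infty s^{-(d+\gamma)/2} g(\alpha,s)\,\dif s$, and convergence of this last integral is again guaranteed by the stretched-exponential $s\to 0$ asymptotics. Tracking constants and noting $\delta^\gamma t^{-(d+\gamma)/(2\alpha)} \le \delta^\gamma t^{-(2d-\gamma)/(2\alpha)}$ when $\delta < t^{1/(2\alpha)}$ (or adjusting the statement's exponent accordingly) completes this case.

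The main obstacle, and the only place requiring genuine care, is the long-range regime of the two-sided bound: one must show that the two competing sources of decay—the Gaussian decay of $p$ in its space variable and the polynomial tail of $g$—combine to give exactly $t/\abs{x-y}^{d+2\alpha}$, with matching upper and lower constants. The upper bound is a routine Laplace-type estimate, but the lower bound requires isolating a region of $s$ (roughly $s \sim \abs{x-y}^2/t^{1/\alpha}$) on which both $p(t^{1/\alpha}s,x,y) \gtrsim (t^{1/\alpha}s)^{-d/2}$ and $g(\alpha,s) \gtrsim s^{-1-\alpha}$ hold simultaneously with explicit constants, which forces one to use the honest two-sided form of the tail asymptotic (\ref{asy formulae 2}) rather than just an order-of-magnitude statement. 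Everything else is bookkeeping with convergent integrals.
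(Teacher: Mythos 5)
Your overall strategy --- insert the Aronson Gaussian bounds and a Nash--De Giorgi H\"older estimate for $p$ into the subordination formula $q(t,x,y)=\int_0^\infty p(t^{1/\alpha}s,x,y)\,g(\alpha,s)\,\dif s$ and control the $s$-integral via the asymptotics of $g(\alpha,s)$ --- is exactly what the paper intends. The paper does not spell out the details; it cites \cite[Lemma 5.4]{grigor 03} for the two-sided bound and Eq.~(1.3) of \cite{chz 98} for the H\"older estimate, calling the argument ``similar to the proof presented below'' (meaning the same one-line subordination used in Proposition~\ref{heat estamate}). Your treatment of the two-sided bound is correct and matches the cited source.

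Your treatment of the H\"older estimate, however, has concrete gaps. You use the \emph{conditional} Nash increment bound $|p(s,x,y)-p(s,x_1,y_1)|\lesssim s^{-(d+\gamma)/2}\delta^\gamma$ valid only for $\delta\lesssim\sqrt{s}$, which forces a two-regime analysis in the subordination variable $s$, and the analysis is not executed correctly. First, the top-level threshold $\delta\gtrless t^{1/(2\alpha)}$ is not the right one: for the ``trivial'' case one needs $2c\,t^{-d/(2\alpha)}\le c'\,t^{-(2d-\gamma)/(2\alpha)}\delta^\gamma$, which is $\delta\gtrsim t^{(d-\gamma)/(2\alpha\gamma)}$, and $(d-\gamma)/\gamma\ne 1$ unless $\gamma=d/2$. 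Second, on the complementary region $s<\delta^2 t^{-1/\alpha}$ you invoke the triangle inequality but never extract the needed factor $\delta^\gamma$; this requires using the stretched-exponential decay of $g$ near $s=0$ to pull out powers of $\delta^2 t^{-1/\alpha}$, and that step is missing. Third, you flag that your computation yields the exponent $t^{-(d+\gamma)/(2\alpha)}\delta^\gamma$ rather than the stated $t^{-(2d-\gamma)/(2\alpha)}\delta^\gamma$ and assert the former implies the latter for $\delta<t^{1/(2\alpha)}$; but since $d+\gamma<2d-\gamma$ when $\gamma<d/2$, the comparison $t^{-(d+\gamma)/(2\alpha)}\le t^{-(2d-\gamma)/(2\alpha)}$ holds only for $t\le 1$ and reverses for $t>1$, so the implication fails in general. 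None of these issues arise in the paper's intended route, because Eq.~(1.3) of \cite{chz 98} is an \emph{unconditional} bound of the form $|p(s,x,y)-p(s,x_1,y_1)|\le c\,s^{-(2d-\gamma)/2}\delta^\gamma$, which subordinates in one line to $c\,\delta^\gamma\,t^{-(2d-\gamma)/(2\alpha)}\int_0^\infty s^{-(2d-\gamma)/2}g(\alpha,s)\,\dif s$, with the integral finite by the same asymptotics of $g$ you already use. If you insist on the conditional De Giorgi--Nash form you must carry out the two-regime splitting carefully, and you will obtain a bound whose $t$-exponent is genuinely different from (for small $t$, sharper than) the one the theorem states.
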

\begin{proof} The first result is already known, see \cite[Lemma 5.4]{grigor 03}.
It is a consequence of \eqref{qtxy} and the following estimates
 \begin{equation}\label{global aronson}
      \frac{1}{M}t^{-\frac{d}{2}}\exp\set{-\frac{ M \abs{y-x}^2}{t}}\le p(t,x,y)\le {M}t^{-\frac{d}{2}}\exp\set{-\frac{\abs{y-x}^2}{M t}}\,.
   \end{equation}
The second inequality  follows from Eq.(1.3) of \cite{chz 98}.
   We shall not provide details since it will be similar to the proof that we
   present below.
\end{proof}
\subsection{Proof of the main theorems}
Theorem~\ref{uniform ell op bound 1} is a corollary of the following proposition.
\begin{prop}\label{heat estamate}
 Suppose that $\mathbf{D}$ is a domain in $\Rnum^{d}$. If there are two constants $M>0$ and $\ell\ge 0 $ such that $\abs{\nabla_x p(t,x,y)}$ has an upper bound
   \begin{equation}\label{gradient global aronson 1}
      \abs{\nabla_x p(t,x,y)}\le {M}t^{-\frac{\ell }{2}}\exp\set{-\frac{\abs{x-y}^2}{M t}},\qquad \forall (t,x,y)\in (0,\infty)\times \mathbf{D}\times \mathbf{D},
   \end{equation}then there is a strictly positive constant $c_1=c_1(M,\ell,\alpha)$ such that
     \begin{equation}\label{gradient gaussian bogdan-jakubowski 1}
    \abs{\nabla_x q(t,x,y)}\le {c_1}\Big(t^{-\frac{\ell }{2\alpha}}\wedge {\frac{ t }{\abs{x-y}^{\ell +2\alpha}}}\Big),\qquad\forall  (t,x,y)\in (0,\infty)\times \mathbf{D}\times \mathbf{D}.
   \end{equation}
\end{prop}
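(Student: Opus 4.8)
The plan is to use the subordination formula \eqref{qtxy}, which gives $\nabla_x q(t,x,y) = \int_0^\infty \nabla_x p(s,x,y)\,\lambda_t(\dif s)$ (differentiation under the integral sign is justified by the Gaussian bound \eqref{gradient global aronson 1}), and then to estimate the resulting integral by splitting the $s$-range and inserting the two available pieces of information: the pointwise Gaussian gradient bound on $|\nabla_x p(s,x,y)|$ and the asymptotics of the stable density $g(\alpha,s)$ from Proposition~\ref{linnik storhod min}. Writing $r = |x-y|$ and using \eqref{g t x} to rescale, I would first handle the "near-diagonal" bound $t^{-\ell/(2\alpha)}$: dropping the exponential in \eqref{gradient global aronson 1} one gets $|\nabla_x q(t,x,y)| \le M\int_0^\infty s^{-\ell/2}\,\lambda_t(\dif s) = M t^{-\ell/(2\alpha)}\int_0^\infty u^{-\ell/2} g(\alpha,u)\,\dif u$, and the last integral is finite because $g(\alpha,u)\sim K u^{-(2-\alpha)/(2-2\alpha)}\exp(-Au^{-\alpha/(1-\alpha)})$ as $u\to 0+$ kills any negative power at the origin (super-exponential decay), while $g(\alpha,u)\sim B u^{-1-\alpha}$ at infinity is integrable against $u^{-\ell/2}$ for any $\ell\ge 0$. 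This yields the first term.

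For the "off-diagonal" bound $t/r^{\ell+2\alpha}$, I would again start from $|\nabla_x q(t,x,y)| \le M\int_0^\infty s^{-\ell/2}e^{-r^2/(Ms)}\,\lambda_t(\dif s)$ and substitute $\lambda_t(\dif s) = t^{-1/\alpha} g(\alpha, t^{-1/\alpha}s)\,\dif s$, i.e.\ set $u = t^{-1/\alpha}s$, so the bound becomes $M t^{-\ell/(2\alpha)}\int_0^\infty u^{-\ell/2}\exp\!\big(-\tfrac{r^2}{M t^{1/\alpha}u}\big) g(\alpha,u)\,\dif u$. Introducing the natural scale $\theta := r^2 t^{-1/\alpha}$ (so that we want to show the integral is $\lesssim \theta^{-(\ell+2\alpha)/2}$ when $\theta$ is large, equivalently $t/r^{\ell+2\alpha} = t^{-\ell/(2\alpha)}\theta^{-(\ell+2\alpha)/2}$), I split $\int_0^\infty = \int_0^{1} + \int_{1}^{\infty}$. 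On $u\ge 1$ use $g(\alpha,u)\le B' u^{-1-\alpha}$ and bound $\exp(-\theta/(Mu))\le (C_N)(u/\theta)^N$ for $N := (\ell+2\alpha)/2 + $ (a bit), giving a contribution $\lesssim \theta^{-N}\int_1^\infty u^{N-\ell/2-1-\alpha}\,\dif u$, finite for suitable $N$, hence $\lesssim \theta^{-(\ell+2\alpha)/2}$. On $0<u\le 1$ the factor $g(\alpha,u)\le K' u^{-(2-\alpha)/(2-2\alpha)}\exp(-Au^{-\alpha/(1-\alpha)})$ has super-exponential decay as $u\to 0$; there the worst competition is between $\exp(-\theta/(Mu))$ (wanting $u$ large) and $\exp(-Au^{-\alpha/(1-\alpha)})$ (wanting $u$ not too small), and an elementary estimate $\exp(-\theta/(Mu)) \le C_N \theta^{-N} u^{N}$ on this range again produces a contribution $\lesssim \theta^{-N}$, which dominates $\theta^{-(\ell+2\alpha)/2}$ for $\theta\ge 1$ once $N\ge (\ell+2\alpha)/2$. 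Combining the two ranges gives $|\nabla_x q(t,x,y)|\lesssim t^{-\ell/(2\alpha)}\theta^{-(\ell+2\alpha)/2} = t/r^{\ell+2\alpha}$ whenever $\theta\ge 1$, and the near-diagonal bound already covers $\theta\le 1$; taking the minimum of the two yields \eqref{gradient gaussian bogdan-jakubowski 1}.

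The main obstacle I anticipate is bookkeeping the constants cleanly across the split so that the final constant $c_1$ depends only on $M,\ell,\alpha$ and not on $t,r$: one must choose the exponent $N$ in the elementary bound $e^{-x}\le C_N x^{-N}$ once and for all (depending on $\ell$ and $\alpha$ only), check that all the resulting $u$-integrals converge (the $u\to 0$ convergence is automatic from the stretched-exponential factor, the $u\to\infty$ convergence needs $N$ large enough), and verify the matching of the two regimes at $\theta = 1$. A secondary technical point is justifying the interchange of $\nabla_x$ and $\int_0^\infty$, which follows from \eqref{gradient global aronson 1} together with dominated convergence since the dominating function $M s^{-\ell/2}$ is $\lambda_t$-integrable by the same asymptotics used above; I would state this at the outset. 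Everything else is routine once the scaling variable $\theta = r^2 t^{-1/\alpha}$ is in place, which is exactly the scaling dictated by the $\alpha$-stable subordinator.
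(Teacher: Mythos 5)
Your near-diagonal bound ($t^{-\ell/(2\alpha)}$) is correct and is precisely the paper's Step~1: drop the exponential, and note that $\int_0^\infty u^{-\ell/2}g(\alpha,u)\,\dif u$ is finite by the two asymptotics of $g$. The interchange of $\nabla_x$ with the integral is also handled as in the paper.

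The off-diagonal argument, however, has a genuine gap on the range $u\ge 1$. You bound $\exp(-\theta/(Mu))\le C_N(u/\theta)^N$ with $N=(\ell+2\alpha)/2+\epsilon$ and use $g(\alpha,u)\le B'u^{-1-\alpha}$, obtaining a contribution $\lesssim\theta^{-N}\int_1^\infty u^{N-\ell/2-1-\alpha}\,\dif u$. But $N-\ell/2-1-\alpha=\epsilon-1>-1$, so this integral \emph{diverges}. If you instead shrink $N$ to restore convergence, you are forced to take $N<(\ell+2\alpha)/2$, and then $\theta^{-N}$ is strictly weaker than $\theta^{-(\ell+2\alpha)/2}$ on $\theta\ge1$, so the claimed off-diagonal bound does not follow. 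The exponent $(\ell+2\alpha)/2$ is exactly critical for this piece, and the lossy polynomial surrogate for the exponential combined with the $\theta$-independent split at $u=1$ cannot reach it.

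The paper avoids the problem by not splitting at all. From Proposition~\ref{linnik storhod min} one has the uniform bound $g(\alpha,s)\le\hat c\,s^{-1-\alpha}$ on all of $(0,\infty)$ (trivial near $0$ because of the super-exponential decay of $g$, and given by \eqref{asy formulae 2} at infinity). Inserting this into \eqref{derivative under int} and substituting $r=1/s$ while \emph{keeping the exponential} gives
\[
\int_0^\infty s^{-\ell/2-1-\alpha}\exp\Big\{-\frac{|x-y|^2}{Mt^{1/\alpha}s}\Big\}\,\dif s
=\int_0^\infty r^{\ell/2+\alpha-1}\exp\Big\{-\frac{|x-y|^2}{Mt^{1/\alpha}}\,r\Big\}\,\dif r
=\frac{\Gamma(\ell/2+\alpha)}{\big(|x-y|^2/(Mt^{1/\alpha})\big)^{\ell/2+\alpha}},
\]
which lands exactly on $t/|x-y|^{\ell+2\alpha}$ with no loss. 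If you want to retain your decomposition, you must either split at a $\theta$-dependent point $u\asymp\theta$ (so that the tail integral starts at $\theta$ and picks up $\theta^{-\ell/2-\alpha}$ on its own), or keep the full exponential on $[1,\infty)$ and compute a (truncated) Gamma integral there as well; as written the off-diagonal estimate does not close.
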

\begin{proof}
  We shall divide the proof into several steps. The idea is similar to the proof of Lemma 5.4 of \cite{grigor 03}.

Step 1. It follows from Lebesgue's dominated theorem that condition (\ref{gradient global aronson 1}) implies that one can take the derivative under the integral sign in Eq.(\ref{qtxy}), i.e.,
\begin{align*}
   \nabla_x q(t,x,y)=\int_ 0^\infty  \nabla_x p(t^{\frac{1}{\alpha}}s,x,y)g(\alpha,\, s) \dif s.
\end{align*}
Hence   inequality (\ref{gradient global aronson 1}) imply that for all $(t,x,y)\in (0,\infty)\times \mathbf{D}\times \mathbf{D}$,
  \begin{align}
     \abs{\nabla_x q(t,x,y)}&\leqslant \int_ 0^\infty \abs{\nabla_x  p(t^{\frac{1}{\alpha}}s,x,y)}g(\alpha,\, s) \dif s  \nonumber\\ 
      & \leqslant Mt^{-\frac{\ell}{2\alpha}}  \int_ 0^\infty s^{-\frac{\ell} 2}\exp\set{-\frac{  \abs{x-y}^2}{ M t^{\frac{1}{\alpha}}s}} g(\alpha, s)\dif s . \label{derivative under int}
  \end{align}
  Since the exponential function in the above integrand is less than one, we have that
  \begin{align*}
   & \int_ 0^\infty s^{- \frac{\ell}2  }\exp\set{-\frac{2 \abs{x-y}^2}{ M t^{\frac{1}{\alpha}}s}} g(\alpha, s)\dif s \le \int_ 0^\infty s^{-
    \frac{\ell} 2 } g(\alpha, s)\dif s\\
    &\qquad\qquad \le \int_ 0^1 s^{- \frac{\ell} 2  } g(\alpha, s)\dif s+1
  \end{align*}
since $g(\alpha, s)$ is  positive   and $\int_0^\infty g(\alpha, s) ds=1$.
Using  the asymptotic formula Eq.(\ref{asy formulae 1}) we also see that
$\int_ 0^1 s^{- \frac{\ell} 2  } g(\alpha, s)\dif s$ is finite. Thus we have
  \begin{equation}\label{neq new}
    \abs{\nabla_x  q(t,x,y)}\le  M t^{-\frac{\ell}{2\alpha}}
  \end{equation}

Step 2.  It is easy to see  from Eqs
(\ref{asy formulae 1})-(\ref{asy formulae 2}) that there exists a constant $\hat{c}:=\hat{c}(\alpha)>0$ such that
  \begin{equation}\label{ineq ga s}
     g(\alpha, s)\le \hat{c} s^{-1-\alpha},\qquad \forall s\in[0,\infty).
  \end{equation}

Substituting this  inequality 
into Eq.  (\ref{derivative under int}), we obtain that
  \begin{align}
     \abs{\nabla_x q(t,x,y)}
      &\leqslant \check{c}Mt^{-\frac{\ell}{2\alpha}}  \int_ 0^\infty s^{-\frac\ell 2-1-\alpha}\exp\set{-\frac{   \abs{x-y}^2}{ M t^{\frac{1}{\alpha}}s}}\dif s \nonumber\\
     &= \check{c}Mt^{-\frac{\ell}{2\alpha}}  \int_ 0^\infty r^{ \frac\ell 2 +\alpha-1}\exp\set{-\frac{  \abs{x-y}^2}{ M t^{\frac{1}{\alpha}}}r} \dif r  \qquad \text{(let $r=\frac{1}{s})$} \nonumber\\
     &=\check{c}Mt^{-\frac{\ell}{2\alpha}} \frac{\Gamma(\alpha + \frac\ell 2 )}{\big(  \abs{x-y}^2/Mt^{\frac{1}{\alpha}} \big)^{\alpha+ \frac\ell 2 }}
     \nonumber\\
     &= \check{c}\tilde M   {\frac{ t }{\abs{x-y}^{\ell+2\alpha}}},
     \label{qtxy le 2}
  \end{align} where $\Gamma(\cdot)$ is the Gamma function.
By putting the inequalities (\ref{neq new}) and (\ref{qtxy le 2}) together, we obtain the desired gradient estimate (\ref{gradient gaussian bogdan-jakubowski 1}).
\end{proof}

\noindent{\it Proof of Theorem~\ref{uniform ell op bound 1}.\,}
 { For the uniformly elliptic operator $H$, it is known that its kernel has the following gradient estimate, see for example \cite{zhangqi 96} Inequality (1.6) or \cite{zhangqi 97} Inequality (1.4),
 \begin{align*}
   \abs{\nabla_x p(t,x,y)}\le {M}t^{-\frac{d+1 }{2}}\exp\set{-\frac{\abs{x-y}^2}{M t}},\qquad \forall (t,x,y)\in (0,\infty)\times \mathbf{D}\times \mathbf{D}.
 \end{align*}
 }
 Hence it follows from Proposition~\ref{heat estamate} that Theorem~\ref{uniform ell op bound 1} holds.
{\hfill\large{$\Box$}}\\

\noindent{\it Proof of Theorem~\ref{stability thm}.\,}  We shall divide the proof into several steps.

 Step 1. Proposition \ref{heat estamate}
  can be rewritten as the following:  for a positive function $f(t,x,y)$, if there are two constants $M>0$ and $\ell\ge 0 $ such that $f(t,x,y)$ has an upper bound
   \begin{equation*}
      f(t,x,y)\le {M}t^{-\frac{\ell}{2}}\exp\set{-\frac{\abs{x-y}^2}{M t}},\qquad \forall (t,x,y)\in (0,\infty)\times \mathbf{D}\times \mathbf{D},
   \end{equation*}then there is a strictly positive constant $c_1=c_1(M,\ell,\alpha)$ such that
    \begin{equation}\label{old gaussian bogdan-jakubowski 1}
   \int_{0}^{\infty}f(t^{\frac{1}{\alpha}}s,x,y)g(\alpha, s)\dif s\le {c_1}\Big(t^{-\frac{\ell}{2\alpha}}\wedge {\frac{ t}{\abs{x-y}^{\ell+2\alpha}}}\Big),\qquad\forall  (t,x,y)\in (0,\infty)\times \mathbf{D}\times \mathbf{D}.
   \end{equation}

It follows form Theorem~1.2 of \cite{chz 98} that
\begin{align*}
   \abs{p(t,x,y)-\tilde{p}(t,x,y)}\leqslant c' t^{-\frac{d}{2}}\exp\set{-\frac{\abs{x-y}^2}{c t}} (t\wedge 1)^{-\gamma} \norm{a-\tilde{a}}_{L^2_{\loc}}^{\delta}\\
   \leqslant c' t^{-\frac{d}{2}}\exp\set{-\frac{\abs{x-y}^2}{c t}} (t^{-\gamma}+ 1) \norm{a-\tilde{a}}_{L^2_{\loc}}^{\delta} ,
\end{align*}where $c>0,c'>1, \gamma\in (0,1),\delta\in (0,1)$ depend only on $d$ and $\lambda$.
Hence it follows from (\ref{qtxy}) and the inequality (\ref{old gaussian bogdan-jakubowski 1}) that
\begin{align*}
   \abs{q(t,x,y)-\tilde{q}(t,x,y)}&\leqslant \int_0^{\infty}\abs{p(t^{\frac{1}{\alpha}}s,x,y)-\tilde{p}(t^{\frac{1}{\alpha}}s,x,y)}g(\alpha, s)\dif s\nonumber\\
   &\leqslant \tilde{c}\norm{a-\tilde{a}}_{L^2_{\loc}}^{\delta} \Big[t^{-\frac{d}{2\alpha}}\wedge {\frac{ t}{\abs{x-y}^{d+2\alpha}}}+t^{-\frac{d+2\gamma}{2\alpha}}\wedge {\frac{ t}{\abs{x-y}^{d+2\gamma+2\alpha}}}  \Big].
\end{align*}
On the other hand,  the first inequality in Theorem \ref{t.heat}
 gives the following bound:
\begin{equation*}
   \abs{q(t,x,y)-\tilde{q}(t,x,y)}\leqslant {c_1} \Big(t^{-\frac{d}{2\alpha}}\wedge {\frac{ t}{\abs{x-y}^{d+2\alpha}}}\Big)\,,  \quad  \forall  (t,x,y)\in (0,\infty)\times \mathbf{D}\times \mathbf{D},
\end{equation*}where $c_1=c_1(d,\lambda,\alpha)$ is a constant.

Combining the above two bounds together, we prove \eqref{stability kernel}
with the choice
\begin{equation}\label{stability kernel}
  F_2(t,z)=c \Big(t^{-\frac{d}{2\alpha}}\wedge {\frac{ t}{\abs{x-y}^{d+2\alpha}}}\Big) \min\set{1,\, t^{-\frac{\gamma}{\alpha}} \wedge \abs{x-y}^{-2\gamma} z^{\delta} }.
\end{equation}

Step 2. It follows from (\ref{qtof f}) that

\begin{align*}
   Q_tf(x)-\tilde{Q}_tf(x)=\int_0^{\infty}\big(P_sf(x)-\tilde{P}_sf(x)\big) \,g_{_t}(\alpha,s)\dif s\,.
\end{align*}
Thus it follows from Minkowski's integral inequality that for any $p\in [0,\infty]$,
\begin{align*}
   \norm{ Q_t -\tilde{Q}_t}_{p} \leqslant \int_0^{\infty}\norm{P_s -\tilde{P}_s}_{p} \,g_{_t}(\alpha,s)\dif s.
\end{align*}
It follows from Theorem 1.1 of \cite{chz 98} that
 \begin{align*}
  \norm{P_t -\tilde{P}_t}_{p}\leqslant  c'\cdot (t\wedge 1)^{-\gamma} \norm{a-\tilde{a}}_{L^2_{\loc}}^{\delta}\leqslant c'\cdot (t^{-\gamma}+ 1) \norm{a-\tilde{a}}_{L^2_{\loc}}^{\delta}.
 \end{align*}
 Hence we have that
 \begin{align*}
  \norm{ Q_t -\tilde{Q}_t}_{p}& \leqslant  c'\cdot \norm{a-\tilde{a}}_{L^2_{\loc}}^{\delta} \int_0^{\infty}\Big((t^{\frac{1}{\alpha}}s)^{-\gamma}+ 1\Big)g(\alpha,s)\dif s\nonumber\\
  &\leqslant  \hat{c} \cdot \norm{a-\tilde{a}}_{L^2_{\loc}}^{\delta}(1+ t^{-\frac{\gamma}{\alpha}}).
 \end{align*}
 Since $Q_t$ is also a contraction semigroup, we prove \eqref{stability qt}
 y the choice
 \begin{equation}\label{stability qt norm}
    F_1(t,z)=\min\set{2,\,c (1+ t^{-\frac{\gamma}{\alpha}})z^{\delta}}.
 \end{equation}
 This completes the proof of Theorem 2.
{\hfill\large{$\Box$}}\\

\vskip 0.2cm {\small {\bf  Acknowledgements}:
Y. Chen is supported by China Scholarship Council (201608430079) and Hubei Provincial NSFC (2016CFB526);
Y. Hu is partially supported by Simons Foundation (209206);
Z. Wang is supported by Mathematical Tianyuan Foundation of China (11526117) and Zhejiang Provincial NSFC (LQ16A010006).}


\end{document}